\newcommand{\blind}{0}
\DeclareMathOperator*{\argmin}{argmin}
\DeclareMathOperator*{\argmax}{argmax}
\newcommand*{\SBP}{SBP\xspace} 
\newcommand*{\dist}[1]{\ensuremath{\mathcal{P}(#1)}\xspace} 
\newcommand*{\distemp}{\ensuremath{\mathcal{P}}\xspace} 
\newcommand*{\prob}[1]{\ensuremath{\mathbb{P}(#1)}\xspace}
\newcommand{\ind}[1]{\raisebox{-0.3mm}{\scalebox{1.2}{\ensuremath{\mathds{1}}}}\ensuremath{\langle #1\rangle}}
\newcommand*{\sub}{\ensuremath{\subseteq}} 
\newcommand*{\bmid}{\ensuremath{\ \Big\vert\ }}
\newcommand*{\rel}[1]{\ensuremath{\mathfrak{r}(#1)}}
\newcommand*{\vertcov}{\ensuremath{\mathcal{D}}}
\newcommand*{\cardi}[1]{\ensuremath{\##1}\xspace}
\newcommand*{\primeprob}[1]{\ensuremath{\mathfrak{R}(#1)}}
\newcommand*{\SampleProb}[2]{\ensuremath{\mathfrak{S}(#1,#2)}}
\newcommand*{\supmarginal}[1]{\ensuremath{\mathfrak{B}(#1)}}
\newcommand*{\stepR}{\texttt{stepR}\xspace}
\newcommand*{\greedy}{\texttt{Greedy}\xspace}
\newcommand*{\limi}[1]{\ensuremath{\lim_{#1\rightarrow \infty}}\xspace} 
\newcommand*{\cp}{changepoint\xspace}
\newcommand*{\cps}{changepoints\xspace}
\newcommand*{\defeq}{\mathrel{\rlap{\raisebox{0.3ex}{$\m@th\cdot$}}\raisebox{-0.3ex}{$\m@th\cdot$}}=}
\newtheorem{definition}{Definition}{\bf}{\rm}
\newtheorem{theorem}{Theorem}{\bf}{\rm}
\newtheorem{lemma}{Lemma}{\bf}{\rm}
\newtheorem*{heuristic}{Greedy}{\bf}{\rm}
\newtheorem*{ILP_problem}{ILP}{\bf}{\rm}
{\bf}{\rm}
\begin{document}

\def\spacingset#1{\renewcommand{\baselinestretch}%
{#1}\small\normalsize} \spacingset{1}


\if0\blind
{  
  \title{\bf Simultaneous Credible Regions for Multiple Changepoint Locations}
  \author{Tobias Siems\thanks{
    The authors would like to thank the following persons for their useful hints and corrections:
Lisa Koeppel, Paul Fearnhead, Nicolas Wieseke, Johann Jakob Preu\ss, Lawrence Bardwell, Areesh Mittal, Poppy Miller and Jamie-Leigh Chapman. The first author gratefully acknowledges support from the Landesgraduiertenf\"orderung, Greifswald and the DAAD (ID 57266578).
	}, Marc Hellmuth, Volkmar Liebscher \\ \smallskip
    Department of Mathematics and Computer Science\\
			University of Greifswald}
  \maketitle
} \fi

\if1\blind
{
  \bigskip
  \bigskip
  \bigskip 
  \begin{center}
    {\LARGE\bf Title}
\end{center}
  \medskip
} \fi

\bigskip
\begin{abstract}
Within a Bayesian retrospective framework, we present a way of examining the distribution of \cps through a novel set estimator.
For a given level, $\alpha$, we aim at smallest sets that cover all \cps with a probability of at least $1-\alpha$.
These so-called smallest simultaneous credible regions, computed for certain values of $\alpha$, provide parsimonious representations of the possible \cp locations.
In addition, combining them for a range of different $\alpha$'s enables very informative yet condensed visualisations. 
Therewith we allow for the evaluation of model choices and the analysis of \cp data to an unprecedented degree.
This approach exhibits superior sensitivity, specificity and interpretability in comparison with highest density regions, marginal inclusion probabilities and confidence intervals inferred by \stepR. 
Whilst their direct construction is usually intractable, asymptotically correct solutions can be derived from posterior samples. 
This leads to a novel NP-complete problem.
Through reformulations into an Integer Linear Program we show empirically that a fast greedy heuristic computes virtually exact solutions.
\end{abstract}

\noindent%
{\it Keywords:}  Highest density regions, Integer Linear Program, Model selection, NP-completeness, \stepR, Spike and Slab
\vfill

\newpage
\spacingset{1.45} 
\sloppy

\section{Introduction}
\label{chap:intro}
Detecting \cps in time series is an important task.
For example, while observing the gating behavior of ion channels \citep{struct_change_ion_channel,modal_gating_ivo}.
There are many algorithms for, and scientific publications on, detecting multiple \cps in time series, 
such as frequentist approaches \citep{friedrich, frick_munk_sieling} and Bayesian approaches \citep{exact_fearnhead, ocpd, fearnhead_online}. 
A more exhaustive overview of existing methods can be found in \cite{eckley2011analysis}.

Detecting \cps in a time series usually comes down to deciding on a set of \cp locations.
Thus, Bayesian frameworks aim to infer a set valued random variable that gives a reasonable representation of this decision \citep{exact_fearnhead, ocpd, lai2011simple}.
The non-deterministic nature of these so-called \emph{posterior random \cps} expresses the uncertainty of their location.

\cite{Rigaill2012} illustrates this uncertainty by means of a Bayesian model with exactly two \cps.
They plot for all  possible pairs of time points  the posterior probability of being these \cps.
The results indicate both that posterior random \cps are highly dependent and that generally more than one combination is likely. 
Unfortunately, this approach is not suitable to monitor the distribution of more than two \cps.
It is a crucial fact that  the space of possible \cp locations is very high-dimensional even for time series of moderate size. 
Thus, an extensive exploration is a nontrivial task.

In Bayesian research, summaries of \cp locations, uncertainty measurements or model selection criteria are often provided by means of marginal inclusion probabilities.
\cite{Rigaill2012} gives a general consideration of this approach, but it has always enjoyed great popularity in the \cp community. 
See for example \cite{perreault2000retrospective, lavielle2001application, tourneret2003bayesian, exact_fearnhead, hannart2009bayesian, fearnhead_dependency, lai2011simple, aston2012implied, nam2012quantifying}.
Marginal inclusion probabilities, as shown in \cp histograms, simply consist of the probabilities for a \cp at each time point and thus, they are point wise statements.
However, due to the uncertainty of their location, (even single) \cps cannot be explored comprehensively by point wise statements.
On these grounds, in this paper we present a novel approach, that incorporates all possible \cp locations simultaneously.

Let $y=(y_1,\ldots,y_n)$ be a time series and let $C\sub\{1,...,n\}$ be the posterior random \cps.
A region that contains all \cps simultaneously with a probability of at least $1-\alpha$ is an $A\sub\{1,\ldots,n\}$ with $\prob{C\sub A}\geq 1-\alpha$.
We call such an $A$ a \emph{simultaneous $\alpha$ level credible region} or simply \emph{credible region} if there is no risk of confusion.
It provides a sensitive assessment of the whole set of possible \cp locations.
However, in order to obtain specific assessments as well, we seek a smallest such region.
This means we seek an element of
\begin{align}
\label{eq:minprob}
\primeprob{\alpha,C}\defeq\argmin\limits_{A\sub \{1,\dots,n\}}\Big\{\cardi{A}\bmid \prob{C\sub A}\geq 1-\alpha\Big\}
\end{align} 
where \cardi{A} is the cardinality of $A$.

\begin{figure}[ht]
\begin{center}
\includegraphics[width=\linewidth]{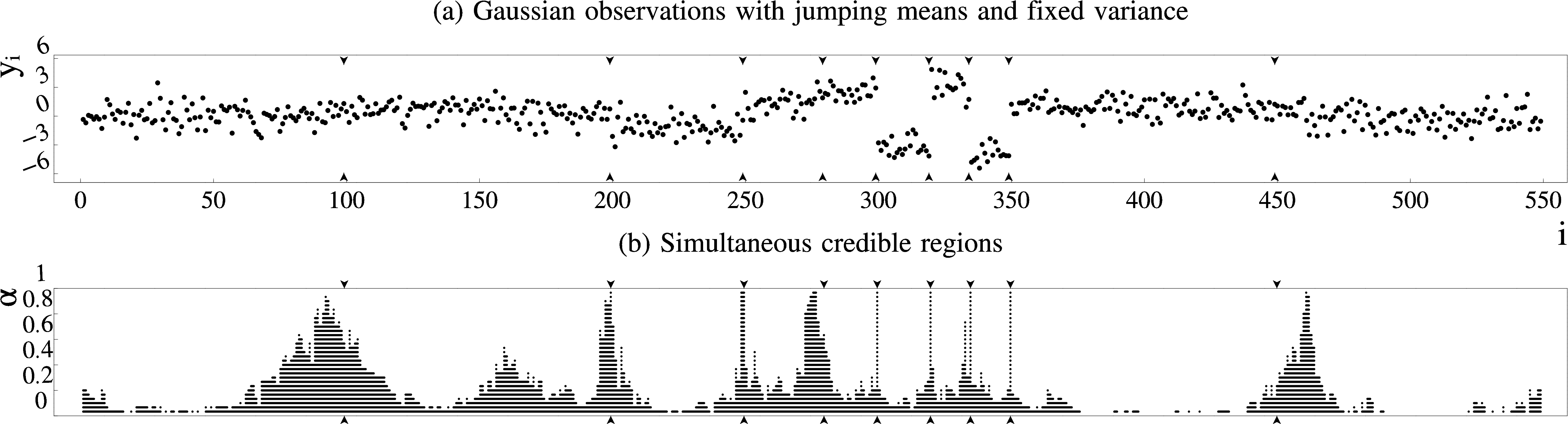} 
\caption{(a) A time series.
(b) 29 approximated smallest simultaneous $\alpha$ level credible regions, each of them   drawn as a broken horizontal line.}
\label{fig:introduction}
\end{center}
\end{figure}

Figure \ref{fig:introduction} demonstrates smallest credible regions by means of an example. 
The data points in (a) where drawn independently from a Gaussian distribution having a constant variance of 1 and mean values that are subject to successive changes.  
The true \cps are marked by small vertical arrows.
To build an exemplary Bayesian model here, we  choose a prior for the \cp locations and mean values:
The time from one \cp to the next is geometrically distributed with success probability $\frac{3}{550}$ and the mean values are distributed according to $\mathcal{N}(0,25)$.

Smallest credible regions are visualised in (b). For each $\alpha\in\Big\{\frac{1}{30}, \ldots, \frac{29}{30}\Big\}$ the plot shows one such (approximated) region as a collection of horizontal lines.
The value of $\alpha\in[0,1]$ controls the size of the regions and thus, governs the trade of between specificity and sensitivity.

The broadness of the credible regions around a true \cp expresses the uncertainty of its locations.
At the same time, a pointed shape reveals that the model is in favor of certain \cp locations.
Existing visualisation techniques, like \cp histograms, are unable to go beyond these two estimates.
However, it is also crucial to get an impression of the sensitivity of the model towards a true \cp.
We can examine this by looking at the height of the peak that relates to the true \cp.
The higher the peak, the higher the model's sensitivity.
We refer to this height as the importance of the true \cp.
The importances of the true \cps in Figure \ref{fig:introduction} are always larger than 0.9.

Of course, importance (as well as broadness and shape) can also be used for \cp data without true \cps.
There, we look at the peaks that belong to the features of interest.
Examples for features, that may occur in practice, are changes in mean, variance, slope or any other change in distribution. 
Furthermore, anomalies like outliers are of concern as well although they are usually supposed to be skipped by the model.
Figure \ref{fig:introduction} shows a nuisance feature in form of a small irregularity at around 170 with an importance of around 0.4.

By means of importance we can conveniently evaluate if the model is sensitive towards the desired features but skips the nuisance ones.
Conversely, we can also detect the relevant features in \cp data on the basis of a given model.
Most notably, this does not require any previous knowledge about the number of features or their positions.
This novel concept is one of the main outcomes of the present paper. 
It allows for a much more detailed analysis of \cp models and \cp data than hitherto possible.
Figure \ref{fig:introduction}(b) demonstrates this.
It shows that the \cp model in use is sensitive towards the desired true \cps and specific towards random distortions in the data.

The outline of this paper is as follows.
In Section \ref{chap:statapp} we consider the above problem from a general statistical and algorithmic viewpoint. 
We examine alternative approaches in Section \ref{chap:alt_approc}.
Section \ref{chap:simcredchanpoi} starts with an overview over sampling strategies in \cp models and deals with importance in a broader and more formal way.
Afterwards, we compare our results with the existing approaches.
We examine Dow Jones returns and demonstrate how credible regions can be used in order to perform model selection in Section \ref{chap:gaussian_change_in_variance}.
Finally, we discuss our results in Section \ref{chap:discussion}.

\section{The Sample Based Problem}
\label{chap:statapp} 

In this section we investigate the computational and mathematical foundations of our approach.
We show how to approximate credible regions in an asymptotic manner, examine the complexity and elaborate suitable algorithms.

We are given an arbitrary random set $C\sub\{1,...,n\}$.
Let $\distemp$ be the distribution of $C$, i.e. $\dist{A}=\prob{C=A}$ for all $A\sub\{1,...,n\}$. 
In this section, we are mainly concerned with the task of finding an element of \primeprob{\alpha, C} (see Equation (\ref{eq:minprob})), where $\alpha$ can take any value in $[0,1]$.

Deriving $\prob{C\sub A}$ is generally intractable since it arises from a summation of $2^{\#A}$ probabilities. 
Likewise, finding an element of \primeprob{\alpha,C} requires a search over the $2^n$ subsets of $\{1,\dots,n\}$. 
Hence, we address this problem in an approximate manner by using relative frequencies instead of probabilities.
\begin{definition}
For $A\sub\{1,\dots,n\}$ and $s_1,\dots,s_m\sub\{1,...,n\}$, let 
\begin{align*}
\rel{A,s_{1:m}}\defeq \frac{1}{m}\sum_{i=1}^m\ind{s_i\sub A}
\end{align*}
where $\ind{\dots}\in \{0,1\}$ is the indicator function which is equal to $1$ iff the bracketed condition is true.
$\rel{A,s_{1:m}}$ corresponds to the relative frequency with which the subsets are covered by $A$.
\end{definition}
In the following theorem, we will show that having independent samples $s_1,\dots,s_m\sub\{1,...,n\}$ from $\distemp$,
we can approximate an element of \primeprob{\alpha,C} by finding an element of
\begin{align*}
\SampleProb{\alpha}{s_{1:m}}\defeq\argmin\limits_{A\sub \{1,\dots,n\}}\Big\{\cardi{A}\bmid \rel{A,s_{1:m}}\geq 1-\alpha\Big\}
\end{align*}
We denote this problem as the \emph{Sample Based Problem (\SBP)}.

\newcommand*{\solutiontolem}{\mathcal{A}}
\begin{theorem}
\label{lem:convergence}
Let $S_1,S_2,\dots\sub\{1,...,n\}$ be independent random sets distributed according to
\distemp and $\alpha\in[0,1]$. 
If there exists an $\solutiontolem\in\primeprob{\alpha,C}$ with $\prob{C\sub
\solutiontolem}>1-\alpha$, then $\SampleProb{\alpha}{S_{1:m}}\sub\primeprob{\alpha,C}$ eventually almost surely.
\end{theorem}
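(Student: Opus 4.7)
The plan is to exploit the fact that the underlying combinatorial universe is finite: there are only $2^n$ candidate sets $A\sub\{1,\ldots,n\}$, so the strong law of large numbers can be applied uniformly in $A$. First I apply SLLN to the i.i.d.\ Bernoullis $\ind{S_i\sub A}$ to obtain $\rel{A,S_{1:m}}\to \prob{C\sub A}$ almost surely as $m\to\infty$ for each fixed $A$, and then intersect the $2^n$ exceptional null sets to get a single probability-one event $E$ on which all these convergences hold simultaneously. From here on I argue on a fixed sample path in $E$.

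Let $k$ denote the cardinality shared by every element of $\primeprob{\alpha,C}$, which is well defined as the minimum in the definition of $\primeprob{\alpha,C}$. Two observations drive the argument. First, the hypothesised $\solutiontolem$ with $\prob{C\sub \solutiontolem}>1-\alpha$ satisfies, by the convergence on $E$, $\rel{\solutiontolem,S_{1:m}}\geq 1-\alpha$ for all sufficiently large $m$; hence $\solutiontolem$ is feasible for $\SampleProb{\alpha}{S_{1:m}}$, which forces $\cardi{A^*}\leq k$ for every $A^*\in\SampleProb{\alpha}{S_{1:m}}$ once $m$ is large enough. Second, by the minimality defining $k$, every $B\sub\{1,\ldots,n\}$ with $\cardi{B}<k$ satisfies $\prob{C\sub B}<1-\alpha$ strictly, and every $B$ with $\cardi{B}=k$ is either already in $\primeprob{\alpha,C}$ or satisfies $\prob{C\sub B}<1-\alpha$ strictly as well.

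Collecting the finitely many $B\sub\{1,\ldots,n\}$ for which $\prob{C\sub B}<1-\alpha$, the convergence on $E$ yields an $m_0$ (depending on the sample path) such that for every $m\geq m_0$ and every such $B$, $\rel{B,S_{1:m}}<1-\alpha$, so no such $B$ is feasible for $\SampleProb{\alpha}{S_{1:m}}$. Combined with the cardinality bound $\cardi{A^*}\leq k$ from the first observation, this forces every $A^*\in\SampleProb{\alpha}{S_{1:m}}$ to satisfy $\cardi{A^*}=k$ and $\prob{C\sub A^*}\geq 1-\alpha$, i.e.\ $A^*\in\primeprob{\alpha,C}$, yielding the ``eventually almost surely'' conclusion.

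The main delicate point is the role of the strict inequality $\prob{C\sub \solutiontolem}>1-\alpha$: without it, $\rel{\solutiontolem,S_{1:m}}$ could oscillate around the threshold $1-\alpha$ and dip below it infinitely often, so $\solutiontolem$ would not be guaranteed to remain feasible and the cardinality bound $\cardi{A^*}\leq k$ would break down. Everything else amounts to a uniform choice of threshold across the $2^n$ subsets, which is permissible precisely because that collection is finite; if $n$ were allowed to grow with $m$, this finite union bound would be the step that requires more care.
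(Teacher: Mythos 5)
Your proposal is correct and follows essentially the same route as the paper's proof: the strong law of large numbers applied uniformly over the finite collection of $2^n$ subsets, eventual infeasibility (in the sample problem) of every $B$ with $\prob{C\sub B}<1-\alpha$, and eventual feasibility of the strictly-above-threshold $\solutiontolem$ to pin down the optimal cardinality. Your explicit bookkeeping via the minimal cardinality $k$ and the closing remark on why the strict inequality is needed are just a slightly more detailed rendering of the paper's argument, not a different one.
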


\begin{proof}[Proof]
Let $A\sub\{1,\ldots,n\}$, since the $\ind{S_i\sub A}, i=1,2,\dots$ are
i.i.d. with finite expectations, the strong law of large numbers states that
$\limi{m}\rel{A,S_{1:m}}=\prob{C\sub A}$ almost surely. 
Since $2^{\{1,\ldots,n\}}$ is finite, $\rel{A,S_{1:m}}$ even converges to $\prob{C\sub A}$ for all $A\sub\{1,\ldots,n\}$ almost surely.

Let $s_1,s_2,\ldots\sub\{1,...,n\}$ be an arbitrary but fixed sequence with $\limi{m}\rel{A,s_{1:m}}=\prob{C\sub A}$ for all $A\sub\{1,\ldots,n\}$.
For $A\sub\{1,\dots,n\}$ with $\prob{C\sub A}< 1-\alpha$ we can pick an $m_A\in\mathbb{N}$ such that
$\rel{A,s_{1:m}}<1-\alpha$ for all $m>m_A$.
Hence,
$\SampleProb{\alpha}{s_{1:m}}\sub\{A\sub\{1,\dots,n\}\mid\prob{C\sub A}\geq 1-\alpha\}$
holds for all $m>m_1\defeq\max\{m_A\mid \prob{C\sub A}<1-\alpha\}$.
Furthermore, since $\prob{C\sub \solutiontolem}>1-\alpha$, we can pick an $m_0\in\mathbb{N}$ with
$\rel{\solutiontolem,s_{1:m}}>1-\alpha\ $ for all $m>m_0$. 
Thus, by choosing $\ell=\max\{m_0,m_1\}$, we obtain $\SampleProb{\alpha}{s_{1:m}}\sub\primeprob{\alpha,C}$ for all $m>\ell$.
\end{proof}

Since the exact value of $\alpha$ is usually irrelevant, the condition that there exists a certain solution can in most cases be neglected.

\subsection{Reformulation of the Sample Based Problem and its complexity}
\label{chap:impl_and_complexity}
In this section we consider the complexity of \SBP, formulate it as an \emph{Integer Linear Program} (ILP)
and introduce a fast and fairly accurate approximation by a greedy method. 
The next theorem shows that there is no hope
to find a polynomial time algorithm to solve \SBP unless $P= NP$ \citep{Garey_johnsons}.
\begin{theorem}
\label{theo:np}
The (decision version of) \SBP is NP-complete.
\end{theorem}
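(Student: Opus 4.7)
My plan is to verify NP-completeness in the standard two steps. Membership in NP is immediate: the decision version of \SBP takes as input the samples $s_1,\dots,s_m\sub\{1,\dots,n\}$, a value $\alpha\in[0,1]$, and an integer $k$, and asks whether some $A\sub\{1,\dots,n\}$ satisfies $\cardi{A}\leq k$ and $\rel{A,s_{1:m}}\geq 1-\alpha$. Any candidate $A$ serves as a polynomial-size certificate whose validity can be checked in time $O(mn)$ by iterating through the samples once and counting how many are contained in $A$.

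For NP-hardness I would reduce from \textsc{Clique}. Given a graph $G=(V,E)$ and an integer $k$, I set the ground set to $V$, take the samples $s_1,\dots,s_m$ to be the edges of $G$ (each regarded as a two-element subset of $V$), and choose $\alpha$ so that $\lceil(1-\alpha)m\rceil=\binom{k}{2}$. Both the construction and the rational encoding of $\alpha$ are polynomial in the size of the input.

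The correctness of the reduction rests on a tight extremal identity. For any $A\sub V$, a sample $s_i=\{u,v\}$ is covered by $A$ iff both endpoints lie in $A$, so $m\cdot\rel{A,s_{1:m}}$ equals the number of edges of the induced subgraph $G[A]$ and is therefore bounded above by $\binom{\cardi{A}}{2}$. Consequently, if $A$ is a yes-witness with $\cardi{A}\leq k$ and at least $\binom{k}{2}$ covered samples, the inequality $\binom{\cardi{A}}{2}\geq\binom{k}{2}$ forces $\cardi{A}=k$ and $G[A]$ to be complete, i.e., $A$ is a $k$-clique. The other direction is immediate, since any $k$-clique yields exactly $\binom{k}{2}$ covered edges on $k$ vertices.

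The main technical hurdle is choosing a source problem whose combinatorial structure aligns with the \emph{subset-containment} relation $s_i\sub A$ used in \SBP rather than the more familiar element-hitting or vertex-covering relations; reductions from \textsc{Vertex Cover} or \textsc{Set Cover}, for instance, would require intricate gadgets because they are naturally about intersections being nonempty rather than containment. \textsc{Clique} is essentially isomorphic to \SBP once the samples are taken to be edges, and the single combinatorial ingredient $\binom{\cardi{A}}{2}\geq\binom{k}{2}\Rightarrow\cardi{A}\geq k$ supplies the needed tightening without any padding.
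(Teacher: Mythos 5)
Your proof is correct. Membership in NP is handled properly, and the reduction from \textsc{Clique} is sound: with the samples taken to be the edges of $G$ and $\alpha=1-\binom{k}{2}/m$, the extremal bound $m\cdot\rel{A,s_{1:m}}\leq\binom{\cardi{A}}{2}$ forces any witness $A$ with $\cardi{A}\leq k$ to be a $k$-clique, and conversely every $k$-clique is a witness. The only loose end is the degenerate case $\binom{k}{2}>m$ (or $k\leq 1$), where your choice of $\alpha$ would leave $[0,1]$; since $G$ then trivially has no $k$-clique (resp.\ trivially has one), the reduction should just emit a fixed no- (resp.\ yes-) instance, which is a one-line fix. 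Regarding the route: the paper relegates the proof to the supplement and gets there by identifying collections of samples with hypergraphs and relating \SBP to a hypergraph optimization problem (minimizing the union of $k$ hyperedges), whereas you reduce directly from \textsc{Clique}. These are close relatives --- your construction is exactly the $2$-uniform instance of the hypergraph problem, and the combinatorial heart ($\binom{\cardi{A}}{2}\geq\binom{k}{2}$ forcing a clique) is the standard hardness argument for that problem --- but your version is more self-contained and elementary, while the paper's detour buys a structural correspondence between random subsets and hypergraphs that it wants to highlight for its own sake.
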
 
The proof is provided in the supplementary material. 
To prove Theorem \ref{theo:np}, we show that there is a one-to-one relationship between our subsets of $\{1,...,n\}$ and hypergraphs. In contrast to (the better-known) graphs, 
hypergraphs can contain edges with more than two vertices \citep{voloshin,berge_hypergraphs} and thus, serve as a natural generalization of graphs. 
In particular, we show in the supplement that there is a close relationship between a particular 
optimization problem for hypergraphs and \SBP. 
Eventually, this allows us to prove the NP-hardness of \SBP and to highlight the close connection between hypergraphs and random subsets of finite sets.

\SBP is NP-hard and thus, there is no polynomial-time algorithm to 
optimally solve a given \SBP-instance. Nevertheless, \SBP 
can be formulated as an ILP and thus, \SBP becomes accessible to highly efficient 
ILP solvers \citep{ilp_bench}. Such ILP solvers can be employed 
to optimally solve at least moderately-sized \SBP-instances. 
The ILP formulation is as follows. 
\begin{ILP_problem}
\label{rem:simplesbp}
Following the notion of Section \ref{chap:statapp}, 
we are given a set of samples $s_1,\ldots,s_m\sub\{1,...,n\}$ and an $\alpha\in[0,1]$.
We declare binary variables $U_i,F_j\in\{0,1\}$ for all $i\in\{1,\ldots,n\}$ and $j\in\{1,\ldots,m\}$.
For $i\in\{1,\ldots,n\}$ let $\vertcov(s_{1:m},i)\defeq\{j\mid i\in s_j\}$ denote the set of samples that contain $i$.
Now define the following constraints
\begin{align*}
&\text{\textbf{\emph{(I)}}}\quad\sum\limits_{j=1}^m F_j\geq m\cdot(1-\alpha)\\
&\text{\textbf{\emph{(II)}}}\quad\forall i\in\{1,\ldots,n\}:\ \sum\limits_{j\in \vertcov(s_{1:m},i)} (1-F_j)\geq  \cardi{\vertcov(s_{1:m},i)}\cdot (1-U_i)
\end{align*}
under which the objective function $\sum_{i=1}^n U_i$ needs to be minimized. 

Having computed an optimum, $A=\{i\mid U_i=1\}\in\SampleProb{\alpha}{s_{1:m}}$, i.e. this set is  a solution of \SBP.
\end{ILP_problem}

The binary variable $F_j$ represents the $j$-th sample. 
If  $F_j = 1$ then $s_j\sub A$.
Thus, the constraint \textbf{(I)} states that $A$ covers at least $m\cdot(1-\alpha)$ samples, which is equivalent to $\rel{A,s_{1:m}}\geq 1-\alpha$ in the definition of $\SampleProb{\alpha}{s_{1:m}}$.
The constraint \textbf{(II)} states that time point $i$ can only be dropped from $A$, if $i\in s_j $ implies that $F_j=0$.
From the perspective of \SBP these constraints are obviously necessary and the proof of sufficiency is provided in the supplement.

A benchmark of several ILP solvers can be found in \cite{ilp_bench}.
Following this advice, we use CPLEX V12.6.3 for Linux x86-64 \cite{cplex} on a Lenovo Yoga 2 Pro (8GB Ram, 4 x 1.8GHZ Intel CPU) to solve our ILP instances.

\subsection{A greedy heuristic}
\label{chap:greedy}
To provide an alternative way to address \SBP, 
we now resort to a simple greedy heuristic.
This approach starts with the whole set of time points and 
greedily removes all time points iteratively. 
The greedy rule used here chooses a time point that is contained in the smallest number of samples.
In the subsequent steps, these samples will be ignored.

\begin{heuristic}
Let $A_0\defeq \{1,\ldots,n\}$. Compute $A_{\ell+1}=A_\ell\backslash\{k_{\ell+1}\}$ iteratively where
$k_{\ell+1}\in\argmax_{i\in A_\ell}\big\{\rel{A_\ell\backslash\{i\},s_{1:m}}\big\}$ until $A_{\ell+1}=\emptyset$.
Use $A_\ell$ as a solution proposal for \SBP for any $\alpha$ with $\rel{A_{\ell+1},s_{1:m}}<1-\alpha\leq \rel{A_\ell,s_{1:m}}$.
\end{heuristic}

\begin{algorithm}[hbt]
\begin{algorithmic}[1]
\footnotesize 
\State{\textbf{Input:} List $s[j]$ that represents the sample $j$, $1\leq j\leq m$}
\State{\hspace{1cm} Sorted lists $\mathcal{C}[i]$ that represents the samples that contain time point $i$, $1\leq i\leq n$}
\State{\textbf{Output:}} $A_1,\ldots,A_n$ \Comment{ $A_{\ell+1}=A_\ell\backslash\{k_{\ell+1}\}$ with $k_{\ell+1}\in\argmax_{i\in A_\ell}\big\{\rel{A_\ell\backslash\{i\},s_{1:m}}\big\}$ }
\State{$A_0=\{1,\ldots,n\}$}
\For{$\ell=0,\ldots,n-1$}\Comment{Discard all time points iteratively}
	\State{$i=\argmin\big\{\mathcal{C}[k]$.length$\ \mid\ k\in A_\ell\big\}$} \label{alg:ass1}\Comment{Complexity $\mathcal{O}(n)$}
	\State{$A_{\ell+1}=A_\ell\backslash\{i\}$}
	\For {$j=1,\ldots,\mathcal{C}[i]$.length and $k=1,\ldots,s[j]$.length}  \Comment{Remove all samples that contain $i$ from $\mathcal{C}$}
		\State{$\quad\mathcal{C}\Big[s[j][k]\Big]$.remove($j$)}\label{alg:rem1}\Comment{Removing has a complexity of $\mathcal{O}(\log(m))$} 
	\EndFor
\EndFor
\end{algorithmic}
\caption{\greedy-Algorithm for \SBP}\label{alg:greedy}
\end{algorithm}
 
Algorithm \ref{alg:greedy} shows pseudocode of \greedy.
The runtime of this algorithm is  $\mathcal{O}(n^2m\log(m))$.
To see this, observe that the first for-loop runs $n$ times
and the assignment in Line \ref{alg:ass1} needs $\mathcal{O}(n)$ time. 
The second for-loop runs $\mathcal{O}(mn)$ times and the removal of
an element in Line \ref{alg:rem1} can be done in  $\mathcal{O}(\log(m))$ time. 
Thus, the  overall time-complexity is 
$\mathcal{O}(n^2 + n^2m\log(m)) = \mathcal{O}(n^2m\log(m))$.

Although, the worst case runtime-complexity is more or less cubic, Algorithm \ref{alg:greedy} can be well applied in practice for the following reasons.
Assume that every sample has exactly $k$ elements uniformly distributed over $\{1,...,n\}$.
In this case, the number of samples containing $i$ is binomially distributed with parameters $\frac{k}{n}$ and $m$.
Thus, their average number is $m\cdot \frac{k}{n}$.
Finally, the average complexity reduces to $\mathcal{O}(k^2m)$ if we use a hash table to store elements of the $\mathcal{C}[i]$'s, a sorted list to store their lengths and assume that $n\log(n)<k^2m$.
In general, where the elements of the samples are non-uniformly distributed and $k$ only represents their maximal cardinality the runtime will be shortened even further.

\greedy provides credible regions for all $\alpha$ values at once. 
However, in contrast to solutions to the ILP, the resulting credible regions will be nested w.r.t. increasing $\alpha$ values.

\section{Alternative approaches}
\label{chap:alt_approc}
In this section we explain two alternative Bayesian approaches which can be derived from $C$ as well.
In the first approach we join highest density regions in order to infer credible regions. 
Highest density regions can be considered as a straightforward simultaneous tool to explore distributions of interest \citep{Held_sim_post}.
The second approach, marginal inclusion probabilities, is a straightforward point wise tool in the context of \cps.

Besides this, it should be noted that \cite{guedon2015segmentation} addresses uncertainty of \cp locations through the entropy of $C$.

\subsection{Highest density regions}
\label{chap:hdr}
A highest density region (HDR) is a certain subset of a probability space with elements having a higher density value than elements outside of it. 
Such a subset can be utilized to characterize and visualise the support of the corresponding probability distribution \citep{hyndman_computing}. 
In a Bayesian context, $(1-\alpha)$-HDR's are often used as simultaneous $\alpha$ level credible regions \citep{Held_sim_post}. 
In this section, we examine HDR's in general and for the case of random subsets of $\{1,...,n\}$.

Let $X$ be a random variable with a density $p$. 
For $\alpha\in[0,1]$, let $q_\alpha$ be an $\alpha$-quantile of $p(X)$, i.e.
$\prob{p(X)\leq q_\alpha}\geq \alpha$ and $\prob{p(X)\geq q_\alpha}\geq
1-\alpha$. 
\begin{definition} The set
$\{x\mid p(x)\geq q_\alpha\}$ is referred to as the \emph{$(1-\alpha)$-HDR} of $X$.
\end{definition}

The $(1-\alpha)$-HDR is a smallest subset of the state space with a probability of at least $1-\alpha$ \citep{box73}.

Now we consider $X=C$.
Our credible regions are subsets of $\{1,\dots,n\}$, whereas the $(1-\alpha)$-HDR of $C$ would be a subset of $2^{\{1,...,n\}}$. 
For the purpose of comparison, we join all successes in the HDR:
\begin{definition} 
Let $s_1,\dots,s_\ell\sub \{1,...,n\}$ be the
$(1-\alpha)$-HDR of $C$. We refer to $\bigcup_{i=1}^\ell s_i$ as
the \emph{joined $(1-\alpha)$-HDR} of $C$. 
\end{definition}
Let $A$ be the joined $(1-\alpha)$-HDR and $\distemp$ the distribution of $C$. Then
\begin{align*}
\prob{C\sub A}=\distemp\big(2^{A}\big)\geq \sum_{i=1}^\ell\dist{s_i}\geq 1-\alpha
\end{align*}
Even though, $s_1,\dots,s_\ell$ have high probabilities, $2^{\cardi{A}}$ may be much larger than $\ell$. 
Likewise, $\prob{C\sub A}$ might be considerably larger than $1-\alpha$. 
Thus, $A$ might be substantially larger than elements of $\primeprob{\alpha,C}$. 
In Section \ref{chap:simcredchanpoi} we will see that this happens, especially for small
$\alpha$.

Unfortunately, in many cases we cannot compute HDR's directly and therefore, we use an approximation scheme \citep{Held_sim_post}.
Let now $s_1,\dots,s_m\sub\{1,...,n\}$ be independent samples from the distribution of $C$ which are arranged in descending order according to their values under $\distemp$, i.e. $i>j\Rightarrow \distemp(s_i)\leq \distemp(s_j)$.
We use $\bigcup_{i=1}^\ell s_i$ with $\ell\defeq {\lceil m\cdot(1-\alpha)\rceil}$ as an approximate joined $(1-\alpha)$-HDR.

\subsection{Marginal inclusion probabilities}
\label{chap:pointwise}
Here we consider probabilities of the form $\prob{i\in C}$ for $i=1,\ldots,n$. 
They can be used to derive subsets of $\{1,\ldots,n\}$ that are closely related to credible regions.

\begin{lemma}
$\{i\mid \prob{i\in C}> \alpha\}$ is a subset of all elements of $\primeprob{\alpha,C}$.
\end{lemma}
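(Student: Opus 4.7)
The plan is to argue by contrapositive (equivalently, contradiction): I will show that if some index $i$ fails to lie in a given candidate credible region $A \in \primeprob{\alpha, C}$, then the marginal inclusion probability at $i$ must be at most $\alpha$, which is the negation of the hypothesis $\prob{i \in C} > \alpha$.

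The key observation is a simple containment fact about events. If $i \notin A$, then the event $\{C \sub A\}$ is disjoint from the event $\{i \in C\}$; equivalently, $\{C \sub A\} \sub \{i \notin C\}$. Taking probabilities on both sides and using that $A$ belongs to $\primeprob{\alpha, C}$ (so $\prob{C \sub A} \geq 1 - \alpha$) gives
\begin{align*}
1 - \alpha \leq \prob{C \sub A} \leq \prob{i \notin C} = 1 - \prob{i \in C},
\end{align*}
which rearranges to $\prob{i \in C} \leq \alpha$. Contrapositively, if $\prob{i \in C} > \alpha$ then $i \in A$ for every $A \in \primeprob{\alpha, C}$, which is exactly the claim.

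There is really no obstacle here, since the argument does not use the minimality of $A$ at all, only the feasibility constraint $\prob{C \sub A} \geq 1 - \alpha$. In fact the same reasoning shows that $\{i \mid \prob{i \in C} > \alpha\}$ is contained in every feasible simultaneous $\alpha$ level credible region, not just the smallest ones. The only mild care needed is to phrase the event inclusion correctly: when $i \notin A$, the containment $C \sub A$ forces $i \notin C$, so the monotonicity of $\probempty$ yields the displayed inequality directly, without any need to enumerate or sum probabilities over subsets.
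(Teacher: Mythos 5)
Your proof is correct and is essentially the paper's argument: both reduce to the observation that if $i\notin A$ then $\prob{C\sub A}\leq\prob{i\notin C}=1-\prob{i\in C}$, so $\prob{i\in C}>\alpha$ forces $i$ into every feasible $\alpha$ level credible region. Your remark that minimality is never used (so the containment holds for all feasible regions, not just the smallest) is accurate and matches the paper's phrasing ``any $\alpha$ level credible region.''
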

\begin{proof}[Proof]
Since $\prob{i\in C}> \alpha$ implies $\prob{C\sub \{1,...,n\}\setminus i}< 1-\alpha$, $i$ has to be part of any $\alpha$ level credible region.
\end{proof}
The Bonferroni correction \citep{dunnett55} can be applied to construct a credible region. 
Let
$\supmarginal{\alpha, C}\defeq\Big\{i\bmid \prob{i\in C}>\frac{\alpha}{n}\Big\}$.
\begin{lemma}
$\prob{C\sub\supmarginal{\alpha, C})}\geq 1-\alpha$
\end{lemma}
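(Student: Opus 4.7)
The plan is to reduce the claim to a direct application of the union bound (Bonferroni's inequality), which is in fact what the reference to Dunnett alludes to. The event $\{C \subseteq \supmarginal{\alpha,C}\}$ is the complement of $\{C \not\subseteq \supmarginal{\alpha,C}\}$, and the latter is exactly the event that $C$ contains at least one index from the ``small marginal'' set
\begin{align*}
B \defeq \{1,\ldots,n\} \setminus \supmarginal{\alpha,C} = \Big\{i \bmid \prob{i\in C} \leq \tfrac{\alpha}{n}\Big\}.
\end{align*}
So my first step is to rewrite $\prob{C \not\subseteq \supmarginal{\alpha,C}}$ as $\prob{\bigcup_{i \in B}\{i \in C\}}$.

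Next, I would apply the union bound to obtain
\begin{align*}
\prob{\bigcup_{i\in B}\{i\in C\}} \leq \sum_{i\in B} \prob{i\in C} \leq \sum_{i\in B}\tfrac{\alpha}{n} = \tfrac{\cardi{B}}{n}\cdot \alpha \leq \alpha,
\end{align*}
where the middle inequality uses the defining property of $B$ and the last uses $\cardi{B} \leq n$. Taking complements then yields $\prob{C\subseteq \supmarginal{\alpha,C}} \geq 1-\alpha$, as required.

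There is no real obstacle here; the argument is a one-line Bonferroni bound once the complement event is rewritten in terms of the ``forbidden'' indices $B$. The only mild subtlety worth flagging in the write-up is that the inequality in the definition of $\supmarginal{\alpha,C}$ is strict, so the complementary set $B$ satisfies $\prob{i\in C}\leq \alpha/n$ (non-strict), which is exactly what the union bound needs. I would keep the proof to two or three lines in the final write-up.
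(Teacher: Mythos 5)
Your proof is correct and is essentially identical to the paper's: both define the complementary set of ``forbidden'' indices, apply the union bound to $\prob{C\cap B\neq\emptyset}$, and use that each marginal probability there is at most $\alpha/n$ with at most $n$ terms. The remark about the strict inequality in the definition of $\supmarginal{\alpha,C}$ yielding the non-strict bound on the complement is a nice touch but changes nothing substantive.
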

\begin{proof}[Proof]
Let $A\defeq\supmarginal{\alpha, C}^c$. 
We conclude that $1-\prob{C\sub A^c}=\prob{C\cap A\not=\emptyset }\leq \sum_{i\in A} \prob{i\in C}\leq \alpha$. 
Thus, $\prob{C\sub A^c=\emptyset}\geq 1-\alpha$ applies.
\end{proof}

Point wise statements suffer from their inability to reflect dependencies. 
To see this, we assume that $\prob{\#C=1}=1$ and  $\prob{i\in C}=\frac{1}{n}$ for all $i=1,\ldots,n$. 
Single time points are strongly dependent, e.g. $i\in C$ implies $j\not\in C$ for all $j\neq i$.
Clearly, $\supmarginal{\alpha, C}=\{1,\ldots,n\}$ for all $\alpha<1$.
Moreover, $\{i\mid \prob{i\in C}> \alpha\}=\emptyset$ for $\alpha\geq \frac{1}{n}$.
In contrast to this, elements of $\primeprob{\alpha, C}$ become smaller if $\alpha$ becomes larger. More precisely, $\#A=\lceil (1-\alpha) n\rceil$ for $A\in\primeprob{\alpha,C}$. 
This is due to the fact that point wise statements ignore dependencies, whereas simultaneous statements incorporate them. 
Hence, in practice $\supmarginal{\alpha, C}$ may be much broader than elements of $\primeprob{\alpha, C}$ and $\{i\mid \prob{i\in C}> \alpha\}$ may be much smaller. 
In Section \ref{chap:simcredchanpoi} we will give an example that underpins this claim.

\section{Multiple changepoints and credible regions in more detail}
\label{chap:simcredchanpoi}

In this section we discuss sampling strategies in \cp models and we consider the concept of importance in a more formal way.
Afterwards we reconsider the aforementioned example in order to compare existing approaches with the credible region approach.
Finally, we investigate the performance of \greedy and the convergence speed of \SBP empirically.

\subsection{Existing sampling strategies}

Generating exact posterior samples of Bayesian multiple \cp models is a well studied task. 
\cite{exact_fearnhead} describes how to infer samples from \cp models that meet two conditions. 
Firstly, a \cp must induce independence between the random variables before and after the \cp.
This applies to all the considered examples in this paper. 
An important counterexample where the independence assumption does not hold is a piecewise linear regression with attached lines.
Secondly, a conjugacy assumption must be met. 
Generating one sample has a complexity of $\mathcal{O}(n^2)$, which can be reduced by pruning.
This sampling method was used in our \cp models, except for the Well-Log example in the supplement.

The algorithm described in \cite{exact_fearnhead} is also capable of sampling when \cps induce independence, but the prior jump distribution is not conjugated and thus, 
closed form representations of the posterior distribution might not be available.
This is done by using numerical integration, which may further impair the complexity.
By resorting to this method, in the Well-Log example we sample with a runtime complexity of $\mathcal{O}(n^3)$.

\cite{fearnhead_dependency} explain how to sample from \cp models that meet the conjugacy assumption but do not induce independence across \cps.
This is done by accepting inaccuracy arising through a pruning approach.
Finally, if the distribution does not meet the independence nor the conjugacy assumption, one may pursue an MCMC or SMC approach \citep{green1995, heard2017}.

\subsection{Importance revisited}
\label{chap:importance}

Characterizing the features in the data which should or should not trigger a \cp enables the formulation of \cp problems in the first place.
An intrinsic property of virtually every \cp problem is that the location of the \cp, placed as a consequence of a feature, is uncertain.
Hence, a feature in the data is usually related to a set of locations instead of a single location.
In order to deal with this fact, we require a simultaneous approach that considers all \cp locations that belong to the same feature in a coherent fashion.

To this end, as intimated in Section \ref{chap:intro}, importance provides an estimate of the sensitivity of \cp models towards features in \cp data.
However, it remains to substantiate this claim mathematically.
We start with our definition of sensitivity.
The sensitivity of a \cp model towards a feature in the data is defined as the probability with which posterior random \cps occur as a consequence of this feature.
Thus, if $A\sub\{1,...,n\}$ represents the set of time points which are related to the feature, the sensitivity equates to $\prob{C\not\sub A^c}$.
In general, determining the set $A$ is a fuzzy task. 
However, since \cp data usually exhibits the relevant features only occasionally, in most cases we can easily specify these sets to a sufficient extent.

In the introduction the importance of a feature is roughly defined as the height of the peak that relates to this feature.
This means we derive importance from a given range of credible regions.
Assume that we are given a whole range of credible regions $R_\alpha\in\primeprob{\alpha,C}$ for all $\alpha\in[0,1]$.
We now define the importance of a feature as $\hat\alpha=\inf\{\alpha\mid R_\alpha\sub A^c\}$, where $A$ is again the set of locations that belongs to the feature.
With this the following lemma applies.
\begin{lemma}
\label{lem:exclude}
The importance of a feature is an upper bound for the sensitivity of the \cp model towards this feature.
\end{lemma}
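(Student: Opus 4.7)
The plan is to unpack both sides of the desired inequality directly from their definitions and then chain two monotonicities of probability. Concretely, writing $\hat\alpha = \inf\{\alpha \mid R_\alpha \subseteq A^c\}$ and the sensitivity as $\prob{C \not\subseteq A^c}$, I want to show $\prob{C \not\subseteq A^c} \leq \hat\alpha$.

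First, I would fix an arbitrary $\alpha$ in the set $\{\alpha \mid R_\alpha \subseteq A^c\}$, so that $R_\alpha \subseteq A^c$. Since $R_\alpha$ is a simultaneous $\alpha$ level credible region (being an element of $\primeprob{\alpha, C}$), the defining property gives $\prob{C \subseteq R_\alpha} \geq 1 - \alpha$. Monotonicity of $\probempty$ under the set inclusion $\{C \subseteq R_\alpha\} \subseteq \{C \subseteq A^c\}$ then yields $\prob{C \subseteq A^c} \geq 1 - \alpha$, or equivalently $\prob{C \not\subseteq A^c} \leq \alpha$.

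Next, since this inequality holds for every $\alpha$ in the set whose infimum defines $\hat\alpha$, the quantity $\prob{C \not\subseteq A^c}$ is a lower bound for that set, so it is bounded above by the infimum itself, giving $\prob{C \not\subseteq A^c} \leq \hat\alpha$ as required.

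There is essentially no obstacle here — the only subtle point to mention is that we must pass to the infimum carefully (the set could in principle fail to attain its infimum, but the bound $\prob{C \not\subseteq A^c} \leq \alpha$ for all admissible $\alpha$ is enough to conclude regardless). I would also briefly remark that the assumed family $(R_\alpha)_{\alpha\in[0,1]}$ only needs to consist of some choice of credible region at each level — no measurability or monotonicity in $\alpha$ is needed — because the argument is carried out one $\alpha$ at a time.
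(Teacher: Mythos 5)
Your proof is correct and follows essentially the same route as the paper's: the key step in both is the chain $1-\alpha \leq \prob{C\sub R_\alpha} \leq \prob{C\sub A^c}$ for any $\alpha$ with $R_\alpha \sub A^c$, followed by passing to the infimum. If anything, your passage to the infimum is cleaner than the paper's, which splits into the cases that the infimum is attained or that $R_\alpha\sub A^c$ holds on some interval $]\hat\alpha,\hat\alpha+\epsilon[$, whereas your observation that $\prob{C\not\sub A^c}$ is a lower bound of the set $\{\alpha\mid R_\alpha\sub A^c\}$ and hence at most its infimum covers all cases at once.
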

\begin{proof}
If there exists an $\epsilon>0$ with $R_{\alpha}\sub A^c$ for all $\alpha\in]\hat\alpha, \hat\alpha+\epsilon[$, we conclude
\begin{align*}
&1-\alpha\leq\prob{C\sub R_{\alpha}}\leq \prob{C\sub A^c}\text{ for all } \alpha\in]\hat\alpha, \hat\alpha+\epsilon[\\
&\Leftrightarrow\quad 1-\hat\alpha\leq \prob{C\sub A^c}
\quad\Leftrightarrow\quad\hat\alpha\geq\prob{C\not\sub A^c}
\end{align*}
In the case where $R_{\hat\alpha}\sub A^c$, we may apply the second part of the above equation directly.
\end{proof}

Since the credible regions are not necessarily nested, in some rare cases, the importance read by means of the above definition may not correspond to a global peak in the credible regions.
Furthermore, if credible regions are only drawn for an incomplete range of $\alpha$'s, we may slightly underestimate the heights of the peaks.
In exchange for these negligible inaccuracies, we obtain an intuitive, visual estimate that provides great insights into the distribution of changepoints and is thus of high practical value.

\begin{figure}[ht]
\includegraphics[width=\linewidth]{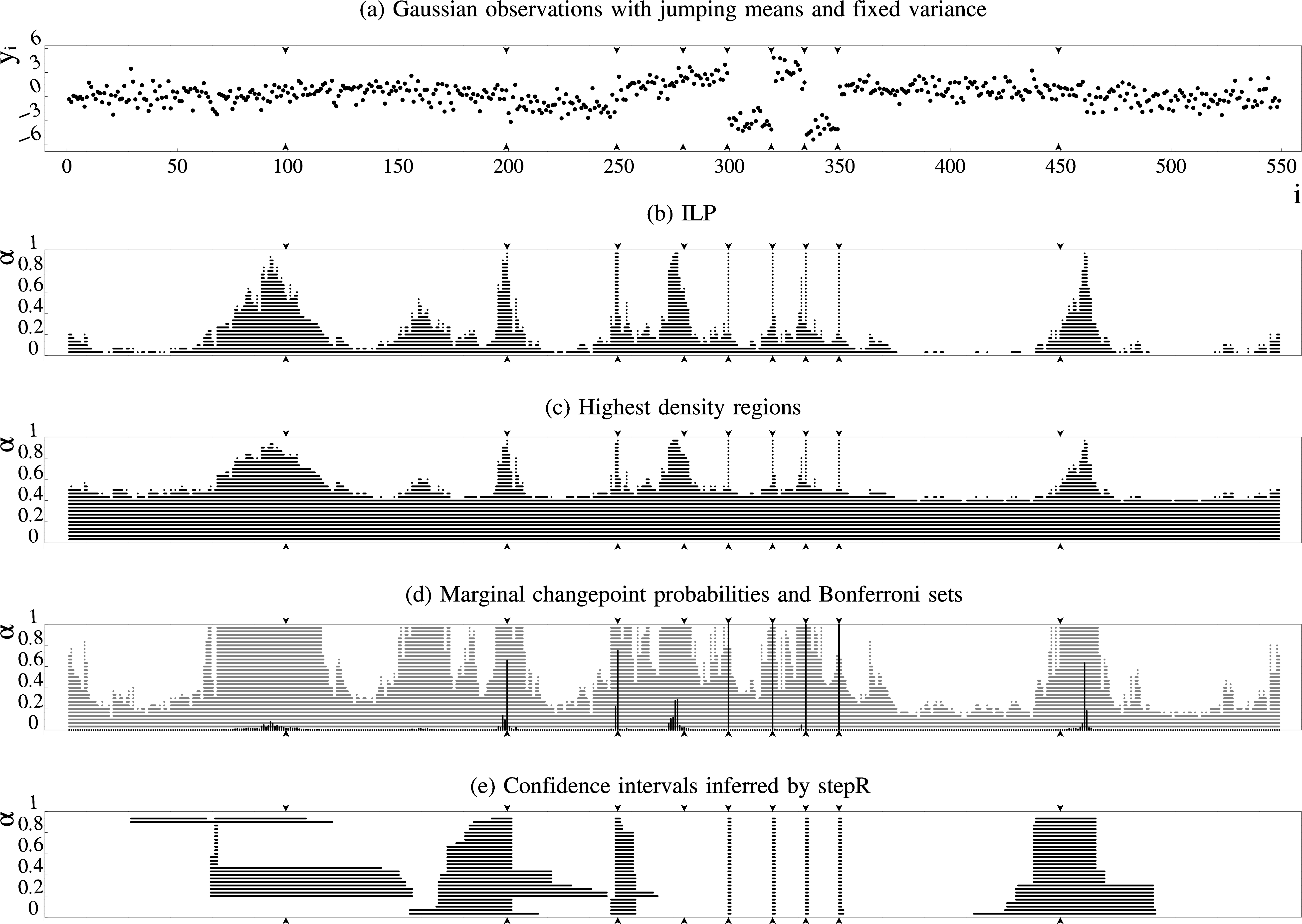}
\caption{(a) Simulated independent Gaussian observations with variance 1 and successive changes in mean. 
The true \cps are marked by vertical arrows.
(b) Solutions derived by solving the ILP using $10^5$ samples.
(c) Joined $(1-\alpha)$-HDR's using $10^7$ samples. 
(d) Marginal inclusion probabilities in black and $\supmarginal{\alpha, C}$ in gray.
(e) Joined confidence intervals inferred by \stepR.
}
\label{fig:data}
\end{figure}

\subsection{Exemplary comparison}
\label{chap:gaus_change_in_mean}

Figure \ref{fig:data}(a) displays the same dataset as Figure \ref{fig:introduction}(a).
There are 6 obvious true \cps at 200, 250, 300, 320, 335 and 350 having a large jump height and two true \cps at 280 and 450 with smaller jump heights of 1 and 0.8, respectively. 
Besides, there is a true \cp at 100 with an even  smaller jump from 0 to 0.5.

We now want to compare the credible regions approach with the highest density regions approach, marginal jump probabilities and confidence intervals inferred by the R package \stepR.
We use the same model as in the introduction.
Figure \ref{fig:data}(b) displays the credible regions with respect to $\alpha=\frac{1}{30},\ldots, \frac{29}{30}$ inferred by solving the ILP.

In the same fashion as before, Figure \ref{fig:data}(c) displays several approximated joined HDR's derived from $10^7$ samples. 
This confirms that joining the elements of an HDR leads to larger subsets of $\{1,\dots,n\}$ compared to elements of \primeprob{\alpha,C}. 
At an $\alpha$ level less than 0.4, the joined HDR already covers most of the time points and thus, holds no special information about the \cp locations anymore.

Figure \ref{fig:data}(d) displays the marginal inclusion probabilities, i.e. $\prob{i\in C}$ highlighted in black. 
Unfortunately, they don't reflect the possible set of \cps very well since they appear to be too sparse. 
Furthermore, due to a dispersal of the probabilities they are not able to express the sensitivity towards the true \cps (see for example the \cps at 100 and 280).
This leads to the conclusion that this approach suffers from a lack of interpretability and is less sensitive than the credible regions approach.

Additionally, Figure \ref{fig:data}(d) shows several credible regions corresponding to $\supmarginal{\alpha, C}$.
As prognosticated in Section \ref{chap:pointwise} these regions are very broad compared to smallest credible regions and thus, less specific.

Figure \ref{fig:data}(e) displays several joined confidence intervals inferred by \stepR \citep{stepR}. 
\stepR first estimates the number of \cps and produces one confidence interval for each \cp.
The plot shows the union of these confidence intervals.
Although confidence sets and credible regions are different by definition, they intend to make similar statements. 

Unfortunately, \stepR does not forecast a confidence interval for the true \cp at 280.
Furthermore, the disappearance of certain \cp locations at decreasing $\alpha$ values seems somewhat confusing and hard to make sense of.
So, the shapes, broadnesses and importances read from the confidence sets are if at all only of little informative value.
Therefore, the authors of this paper consider the confidence sets inferred by \stepR as unsuitable for this purpose, because they suffer from a grave lack of interpretability.

The supplement contains a collection of pictures similar to Figure \ref{fig:data}.
There, the data was repeatedly generated using the same \cp locations and mean values.
It becomes apparent, that these kind of plots, produced with \stepR, are almost consistently of poor quality.

In Section \ref{chap:importance} we show that the importance of a feature in the data is an upper bound for the sensitivity of the model towards this feature. 
We can examine the differences between these two estimates empirically by means of the above example.
Sensitivity and importance match for all the true \cps except the first one.
The importance of the first \cp is approximately 0.94, whereas its sensitivity with regard to the interval from 0 to 130 is approximately 0.72.
That's a deviation of around 0.2.
The small irregularity at around 170 results in an even bigger deviation of around 0.3.

\begin{figure}[ht]
\begin{center}
\includegraphics[width=\linewidth]{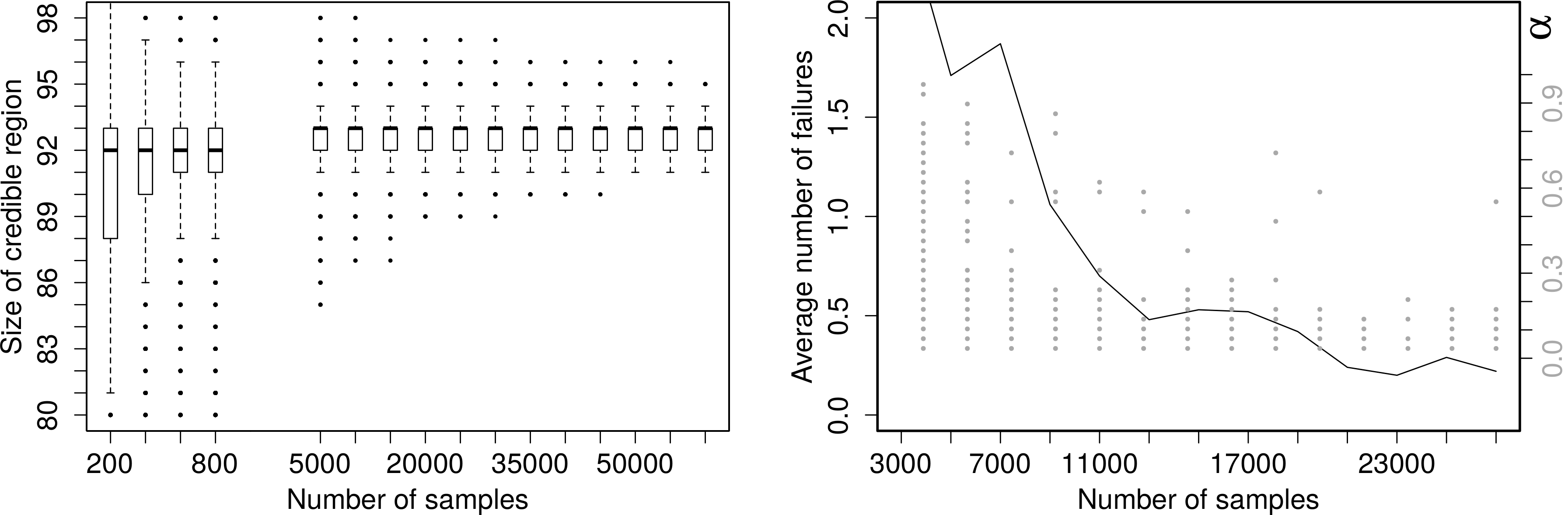} 
\caption{(left) An empirical convergence proof of \SBP. Boxplots over the sizes of the elements of $\SampleProb{0.3}{s_{1:m}}$ where $m$ is varying according to the x-axis.
(right) Illustration of the accuracy of solutions inferred by \greedy. 
The black graph illustrates the average number of failures within 29 different credible regions (left y-axis). 
The gray dots mark the $\alpha$ values (right y-axis) where failures occurred.}
\label{fig:boxplots}
\end{center}
\end{figure}

\subsection{Empirical proof of convergence and the accuracy of \greedy}
\label{chap:conv_and_accuracy}

Figure \ref{fig:boxplots} (left) demonstrates how solutions to \SBP evolve with increasing sample size.
For several sample sizes $m$ it shows boxplots (using 1000 repetitions) over the size of elements of $\SampleProb{0.3}{s_{1:m}}$.
It can be observed that the sizes of the credible regions increase with an increasing sample size. 
This is due to the fact that we need a certain number of samples to cover the possible \cp locations satisfactorily.
However, at the same time, we observe an increasing preciseness which is a result of Theorem \ref{lem:convergence}.

For different sample sizes we compared for each $\alpha\in\Big\{\frac{1}{30}, \ldots, \frac{29}{30}\Big\}$ the solutions provided by solving the ILP with those provided by \greedy.
In Figure \ref{fig:boxplots} (right), the black graph illustrates the average number of $\alpha$'s (using 100 repetitions) where \greedy failed to provide an optimal solution.
At higher sample counts, there are less than 0.3 of the 29 credible regions wrong. 
The gray points represent the $\alpha$ values (right axis) where the ILP was able to compute smaller regions than \greedy.

Hence, \greedy performs virtually exact on this \cp problem.
However, at smaller sample counts it gets more frequently outwitted by random.
Fortunately, this shows that if \greedy does not compute an ideal region for a certain $\alpha$, it will compute correct ones later again.
This is due to the fact that credible regions for \cp locations are roughly nested (like the solutions provided by \greedy). 
Only in some cases it is possible through solving the ILP to remove certain time points a little bit earlier than \greedy. 
Furthermore, because \greedy removes samples having a high number of \cps fast, it works well together with \cp models, since they prefer to explain the data through a small number of \cps.
Thus, we can conjecture that \greedy will perform just as well in most \cp scenarios.

\section{An example of use for model selection}
\label{chap:gaussian_change_in_variance}
\begin{figure}[ht]
\includegraphics[width=\linewidth]{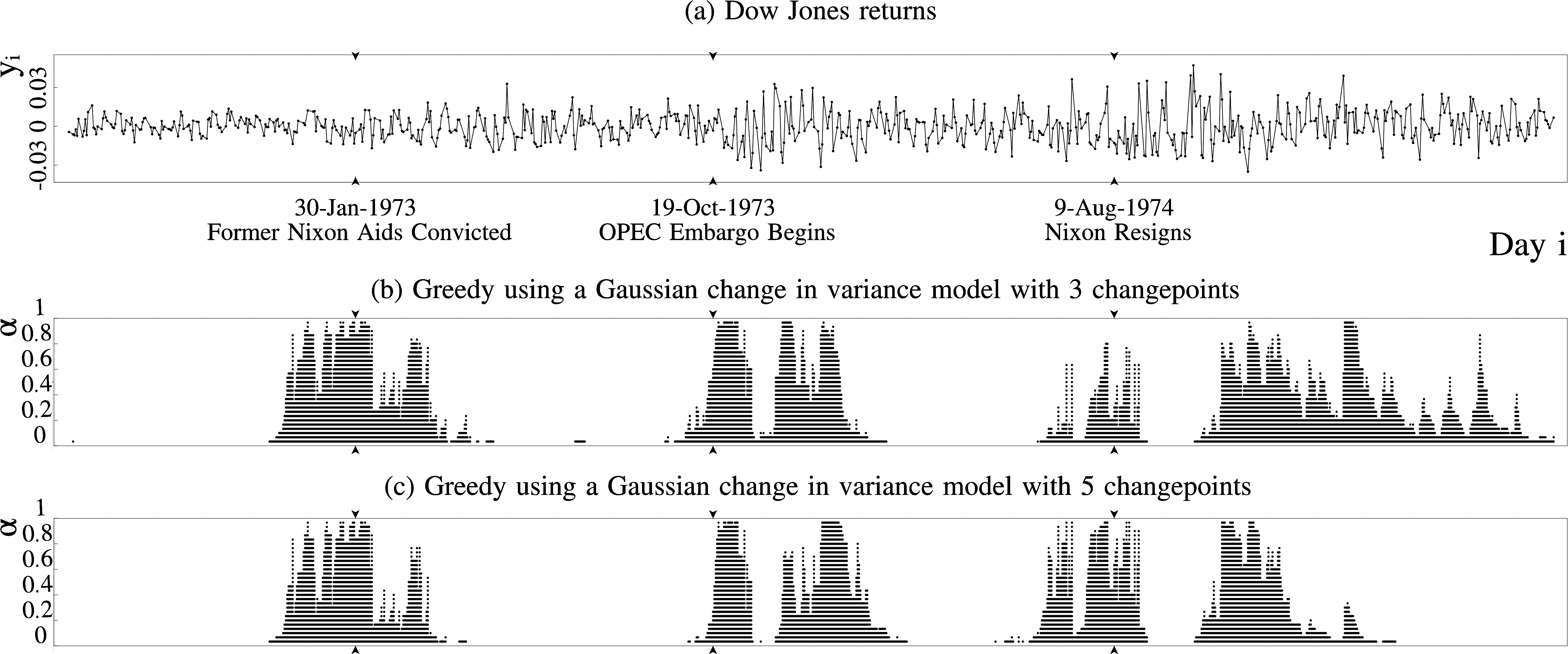}
\caption{(a) Dow Jones returns. (b) and (c) Different regions provided by \greedy.
}
\label{fig:dow_jones}
\end{figure}

Now we examine Dow Jones returns observed between 1972 and 1975 \citep{ocpd}, see Figure \ref{fig:dow_jones}(a). 
There are three documented events highlighted on January 1973, October 1973 and August 1974. 
The data is modeled as normally distributed with constant mean equal to 0 and jumping variances.
The variances are distributed according to an inverse gamma distribution with parameters 1 and $10^{-4}$ (see also \cite{ocpd}).
Instead of allowing a random number of \cps, this time we predetermine the number of \cps to 3 respectively 5 and our aim is to compare these two model choices by means of credible regions.

As we can see in Figure \ref{fig:dow_jones}(b), the regions become fairly broad especially in the last third of the picture, which gives rise to additional, nonsensical \cp locations.
The reason for this is that assuming only one \cp after the second event, yields to a misjudgment of the third or fourth variance. 
Thus, the third \cp becomes superfluous and its location highly uncertain.

In contrast, in the case of five \cps in (c) the illustration turns out to be much more differentiated.
The regions stay fairly narrow even for very small values of $\alpha$.
Just a quick look at the credible regions immediately reveals that fixing the number of \cps may impair the inference dramatically. 
Thus, whenever possible, we should use a model that allows the number of \cps to be determined at random.

\section{Discussion}
\label{chap:discussion}

In this paper we develop a novel set estimator in the context of Bayesian \cp analysis. 
It enables a new visualisation technique that provides very detailed insights into the distribution of \cps.
The resulting plots can be analysed manually just by considering the concepts of
broadness to assess the uncertainty of the \cp locations in regards to a feature,
shape to explore the \cp locations the model is in favor of to explain a feature, and
importance to get an idea about the sensitivity of the model towards a feature.
This greatly facilitates the evaluation and the adjustment of \cp models on the basis of a given dataset.
However, by means of these three concepts, we are also able to conveniently analyse \cp datasets on the basis of a predetermined \cp model.

We say that a credible region points to a feature in the data if the region contains at least one of the \cp locations that are triggered by the model as a consequence of this feature.
An $\alpha$ level credible region will point to all features having a sensitivity larger than $\alpha$.

Against this backdrop, we may derive a single $\alpha$ level credible region from a given model in order to analyse the features in a dataset. 
Thus, we need to choose $\alpha$ in such a way that the corresponding credible region points to the features of interest, but skips the uninteresting ones.
A good \cp model will represent the degree of interest through its sensitivity.
On that basis, we can choose $\alpha$ such that the requirements regarding accuracy and parsimony are met.
\cite{frick_munk_sieling} recommend to choose $\alpha=0.4$ for their method.
This is a reasonable choice.
However, it becomes apparent in Section \ref{chap:gaus_change_in_mean} that the difference between sensitivity and importance can be around 0.3 in difficult cases.
Thus, we recommend to choose $\alpha=0.7$ instead.

An essential quality of the \cp model is its willingness to jump since it highly affects the model's  sensitivity towards features.
In the supplement you can find several video files illustrating this through the success probability for the distribution of the sojourn time between successive \cps.
In this context, if \cp samples are available but the necessary modifications of the model are not feasible anymore,
a sloppy but perhaps effective way to obtain a representative credible region is to increase or decrease $\alpha$ until a reasonable coarseness comes about.
However, systematic defects as in the model depicted in Figure \ref{fig:dow_jones}(b) cannot be tackled in this way.

Credible regions provide valuable knowledge about groups of \cp locations that explain single features jointly.
Besides, they reveal a little bit about combinations of \cps with respect to more than one feature.
The shapes of the credible regions shown in Figure \ref{fig:data}(b) and \ref{fig:dow_jones}(c) suggest that the data can be explained very well through combinations of 9 respectively 5 \cps, which are limited to very few locations.
On the other hand, the medium importance of the feature around 170 in Figure \ref{fig:data}(b) shows that there is a notable alternate representation, which could be quite different to that of the true \cps.

Our theory so far is built on random sets which represent the posterior random \cps.
However, since we can specify a bijective function between subsets of $\{1,...,n\}$ and binary sequences in $\{0,1\}^n$, all the theory in this paper applies equally to sequences of binary random variables of equal length.
Thus, we may apply credible regions to ``Spike and Slab'' regression \citep{spike_slab_mitchel, george_mccullogh_gibbs, ishwaran2005} as well.
There, each covariate is assigned to a binary variable which determines if the covariate is relevant for explaining the responses.
However, covariates can generally not be described in terms of a time series and this could hinder a graphical analysis through credible regions.

Even though the construction of credible regions evoke acceptance regions from statistical testing, in this paper we do not intent to create a method in this direction.
As we can see in the example in Section \ref{chap:gaus_change_in_mean}, credible regions can be fairly broad for say $\alpha=0.05$.
While this would interfere with statistical testing purposes, it provides valuable insights into \cp models.

The ILP can be improved by introducing a constraint for each single \cp in the samples, instead of having constraints for larger sets of samples that share the same \cp location (Constraint II).
This is because for an ILP solver, many small constraints are easier to handle than a big one, that involves many variables.
However, due to high runtimes we do not recommend using the ILP.
While we are able to compute solutions to the Gaussian change in mean example, it is not possible to compute all 29 solutions to the Dow Jones example within a week.

To conclude, the authors of this paper highly advice the use of \greedy's credible regions to evaluate and justify \cp models. 
To the same extent, we recommend to use them for the analysis of \cp data.
To this end, the R Package \textit{SimCredRegR} provides a fast implementation of \greedy and plotting routines that can be applied to \cp samples without further ado.

\begin{center}
{\large\bf SUPPLEMENTARY MATERIAL}
\end{center}

The R Package \textit{SimCredRegR}, which comes with the supplementary material, provides all the datasets and sampling algorithms that were used throughout this paper.
It further enables the computation of credible regions according to \greedy, joined highest density regions, marginal inclusion probabilities, Bonferroni sets  and stepR's joined confidence intervals.
Besides you can find the R Package \textit{SimCredRegILPR} to compute credible regions according to the ILP.
Several video (``.mp4'') files which demonstrate how credible regions evolve at different parameter choices can be found.
We further provide a proof of Theorem \ref{theo:np} and the correctness of the ILP.
We examine two more examples:  well-log data and coal mining disasters data. 
By means of different realisations of the data in Figure \ref{fig:data}, we also provide a comparison of our credible regions and \stepR's confidence intervals.
This can be found in the file ``collection\_of\_different\_data\_simulations.pdf''.
The supplement is available at:  https://github.com/siemst/simcredreg .

\bibliographystyle{chicago}      
\bibliography{biblio}   

\begin{thebibliography}{}

\bibitem[\protect\citeauthoryear{Adams and MacKay}{Adams and
  MacKay}{2007}]{ocpd}
Adams, R.~P. and D.~J. MacKay (2007).
\newblock Bayesian online changepoint detection.
\newblock Cambridge, UK.
\newblock arXiv:0710.3742.

\bibitem[\protect\citeauthoryear{Aston, Peng, and Martin}{Aston
  et~al.}{2012}]{aston2012implied}
Aston, J.~A., J.-Y. Peng, and D.~E. Martin (2012).
\newblock Implied distributions in multiple change point problems.
\newblock {\em Statistics and Computing\/}~{\em 22\/}(4), 981--993.

\bibitem[\protect\citeauthoryear{Berge}{Berge}{1984}]{berge_hypergraphs}
Berge, C. (1984).
\newblock {\em {Hypergraphs: Combinatorics of Finite Sets}}.
\newblock North-Holland Mathematical Library. Elsevier Science.

\bibitem[\protect\citeauthoryear{Box and Tiao}{Box and Tiao}{1973}]{box73}
Box, G. and G.~Tiao (1973).
\newblock {\em {Bayesian inference in statistical analysis}}.
\newblock Addison-Wesley series in behavioral science: quantitative methods.
  Addison-Wesley Pub. Co.

\bibitem[\protect\citeauthoryear{Doyle}{Doyle}{2004}]{struct_change_ion_channel}
Doyle, D.~A. (2004).
\newblock {Structural changes during ion channel gating}.
\newblock {\em Trends in Neurosciences\/}~{\em 27\/}(6), 298 -- 302.

\bibitem[\protect\citeauthoryear{Dunnett}{Dunnett}{1955}]{dunnett55}
Dunnett, C.~W. (1955).
\newblock A multiple comparison procedure for comparing several treatments with
  a control.
\newblock {\em Journal of the American Statistical Association\/}~{\em
  50\/}(272), 1096--1121.

\bibitem[\protect\citeauthoryear{Eckley, Fearnhead, and Killick}{Eckley
  et~al.}{2011}]{eckley2011analysis}
Eckley, I.~A., P.~Fearnhead, and R.~Killick (2011).
\newblock Analysis of changepoint models.
\newblock In D.~Barber, A.~T. Cemgil, and S.~Chiappa (Eds.), {\em Bayesian Time
  Series Models}, pp.\  205--224. Cambridge University Press, Cambridge.

\bibitem[\protect\citeauthoryear{Fearnhead}{Fearnhead}{2006}]{exact_fearnhead}
Fearnhead, P. (2006).
\newblock {Exact and efficient Bayesian inference for multiple changepoint
  problems}.
\newblock {\em Statistics and Computing\/}~{\em 16\/}(2), 203--213.

\bibitem[\protect\citeauthoryear{Fearnhead and Liu}{Fearnhead and
  Liu}{2007}]{fearnhead_online}
Fearnhead, P. and Z.~Liu (2007).
\newblock {On-line inference for multiple changepoint problems}.
\newblock {\em Journal of the Royal Statistical Society: Series B (Statistical
  Methodology)\/}~{\em 69\/}(4), 589--605.

\bibitem[\protect\citeauthoryear{Fearnhead and Liu}{Fearnhead and
  Liu}{2009}]{fearnhead_dependency}
Fearnhead, P. and Z.~Liu (2009).
\newblock {Efficient Bayesian analysis of multiple changepoint models with
  dependence across segments}.
\newblock {\em Statistics and Computing\/}~{\em 21\/}(2), 217--229.

\bibitem[\protect\citeauthoryear{Frick, Munk, and Sieling}{Frick
  et~al.}{2014}]{frick_munk_sieling}
Frick, K., A.~Munk, and H.~Sieling (2014).
\newblock {Multiscale change point inference}.
\newblock {\em Journal of the Royal Statistical Society: Series B (Statistical
  Methodology)\/}~{\em 76\/}(3), 495--580.

\bibitem[\protect\citeauthoryear{Friedrich, Kempe, Liebscher, and
  Winkler}{Friedrich et~al.}{2008}]{friedrich}
Friedrich, F., A.~Kempe, V.~Liebscher, and G.~Winkler (2008).
\newblock {Complexity Penalized M-Estimation: Fast Computation}.
\newblock {\em Journal of Computational and Graphical Statistics\/}~{\em
  17\/}(1), 201--224.

\bibitem[\protect\citeauthoryear{Garey and Johnson}{Garey and
  Johnson}{1979}]{Garey_johnsons}
Garey, M.~R. and D.~S. Johnson (1979).
\newblock {\em {Computers and Intractability: A Guide to the Theory of
  NP-Completeness}}.
\newblock New York, NY, USA: W. H. Freeman \& Co.

\bibitem[\protect\citeauthoryear{George and McCulloch}{George and
  McCulloch}{1993}]{george_mccullogh_gibbs}
George, E.~I. and R.~E. McCulloch (1993).
\newblock {Variable Selection via Gibbs Sampling}.
\newblock {\em Journal of the American Statistical Association\/}~{\em
  88\/}(423), 881--889.

\bibitem[\protect\citeauthoryear{Green}{Green}{1995}]{green1995}
Green, P.~J. (1995).
\newblock {Reversible jump Markov chain Monte Carlo computation and Bayesian
  model determination}.
\newblock {\em Biometrika\/}~{\em 82\/}(4), 711--732.

\bibitem[\protect\citeauthoryear{Gu{\'e}don}{Gu{\'e}don}{2015}]{guedon2015segmentation}
Gu{\'e}don, Y. (2015).
\newblock Segmentation uncertainty in multiple change-point models.
\newblock {\em Statistics and Computing\/}~{\em 25\/}(2), 303--320.

\bibitem[\protect\citeauthoryear{Hannart and Naveau}{Hannart and
  Naveau}{2009}]{hannart2009bayesian}
Hannart, A. and P.~Naveau (2009).
\newblock Bayesian multiple change-points and segmentation: application to
  homogenization of climatic series homogenization of climatic series.
\newblock In {\em EGU General Assembly Conference Abstracts}, Volume~11, pp.\
  10930.

\bibitem[\protect\citeauthoryear{Heard and Turcotte}{Heard and
  Turcotte}{2017}]{heard2017}
Heard, N.~A. and M.~J.~M. Turcotte (2017).
\newblock Adaptive sequential monte carlo for multiple changepoint analysis.
\newblock {\em Journal of Computational and Graphical Statistics\/}~{\em
  26\/}(2), 414--423.

\bibitem[\protect\citeauthoryear{Held}{Held}{2004}]{Held_sim_post}
Held, L. (2004).
\newblock {Simultaneous Posterior Probability Statements from Monte Carlo
  Output}.
\newblock {\em Journal of Computational and Graphical Statistics\/}~{\em
  13\/}(1), 20--35.

\bibitem[\protect\citeauthoryear{Hotz and Sieling}{Hotz and
  Sieling}{2016}]{stepR}
Hotz, T. and H.~Sieling (2016).
\newblock {\em {{stepR}: Fitting Step-Functions}}.
\newblock R package version 1.0-4.

\bibitem[\protect\citeauthoryear{Hyndman}{Hyndman}{1996}]{hyndman_computing}
Hyndman, R.~J. (1996).
\newblock {Computing and Graphing Highest Density Regions}.
\newblock {\em The American Statistician\/}~{\em 50\/}(2), 120--126.

\bibitem[\protect\citeauthoryear{{IBM}}{{IBM}}{2016}]{cplex}
{IBM} (2004--2016).
\newblock {IBM ILOG CPLEX C++ Optimizer}.
\newblock
  \url{http://www-01.ibm.com/software/integration/optimization/cplex-optimizer/}.

\bibitem[\protect\citeauthoryear{Ishwaran and Rao}{Ishwaran and
  Rao}{2005}]{ishwaran2005}
Ishwaran, H. and J.~S. Rao (2005, 04).
\newblock Spike and slab variable selection: Frequentist and bayesian
  strategies.
\newblock {\em Ann. Statist.\/}~{\em 33\/}(2), 730--773.

\bibitem[\protect\citeauthoryear{Lai and Xing}{Lai and
  Xing}{2011}]{lai2011simple}
Lai, T.~L. and H.~Xing (2011).
\newblock A simple bayesian approach to multiple change-points.
\newblock {\em Statistica Sinica\/}~{\em 21}, 539--569.

\bibitem[\protect\citeauthoryear{Lavielle and Lebarbier}{Lavielle and
  Lebarbier}{2001}]{lavielle2001application}
Lavielle, M. and E.~Lebarbier (2001).
\newblock An application of mcmc methods for the multiple change-points
  problem.
\newblock {\em Signal Processing\/}~{\em 81\/}(1), 39--53.

\bibitem[\protect\citeauthoryear{Meindl and Templ}{Meindl and
  Templ}{2013}]{ilp_bench}
Meindl, B. and M.~Templ (2013).
\newblock {Analysis of commercial and free and open source solvers for linear
  optimization problems}.
\newblock {\em Analysis\/}.

\bibitem[\protect\citeauthoryear{Mitchell and Beauchamp}{Mitchell and
  Beauchamp}{1988}]{spike_slab_mitchel}
Mitchell, T.~J. and J.~J. Beauchamp (1988).
\newblock {Bayesian Variable Selection in Linear Regression}.
\newblock {\em Journal of the American Statistical Association\/}~{\em
  83\/}(404), 1023--1032.

\bibitem[\protect\citeauthoryear{Nam, Aston, and Johansen}{Nam
  et~al.}{2012}]{nam2012quantifying}
Nam, C.~F., J.~A. Aston, and A.~M. Johansen (2012).
\newblock Quantifying the uncertainty in change points.
\newblock {\em Journal of Time Series Analysis\/}~{\em 33\/}(5), 807--823.

\bibitem[\protect\citeauthoryear{Perreault, Parent, Bernier, Bobee, and
  Slivitzky}{Perreault et~al.}{2000}]{perreault2000retrospective}
Perreault, L., E.~Parent, J.~Bernier, B.~Bobee, and M.~Slivitzky (2000).
\newblock Retrospective multivariate bayesian change-point analysis: a
  simultaneous single change in the mean of several hydrological sequences.
\newblock {\em Stochastic Environmental Research and Risk Assessment\/}~{\em
  14\/}(4), 243--261.

\bibitem[\protect\citeauthoryear{Rigaill, Lebarbier, and Robin}{Rigaill
  et~al.}{2012}]{Rigaill2012}
Rigaill, G., E.~Lebarbier, and S.~Robin (2012).
\newblock Exact posterior distributions and model selection criteria for
  multiple change-point detection problems.
\newblock {\em Statistics and Computing\/}~{\em 22\/}(4), 917--929.

\bibitem[\protect\citeauthoryear{Siekmann, Sneyd, and Crampin}{Siekmann
  et~al.}{2014}]{modal_gating_ivo}
Siekmann, I., J.~Sneyd, and E.~Crampin (2014).
\newblock {Statistical analysis of modal gating in ion channels}.
\newblock {\em Proceedings of the Royal Society of London A: Mathematical,
  Physical and Engineering Sciences\/}~{\em 470\/}(2166), 1--19.

\bibitem[\protect\citeauthoryear{Tourneret, Doisy, and Lavielle}{Tourneret
  et~al.}{2003}]{tourneret2003bayesian}
Tourneret, J.-Y., M.~Doisy, and M.~Lavielle (2003).
\newblock Bayesian off-line detection of multiple change-points corrupted by
  multiplicative noise: application to sar image edge detection.
\newblock {\em Signal Processing\/}~{\em 83\/}(9), 1871--1887.

\bibitem[\protect\citeauthoryear{Voloshin}{Voloshin}{2009}]{voloshin}
Voloshin, V. (2009).
\newblock {\em {Introduction to Graph and Hypergraph Theory}}.
\newblock Nova Science Publishers.

\end{thebibliography}


\end{document}